\newtheorem{theorem}{Theorem}
\newtheorem{lemma}{Lemma}
\theoremstyle{definition}
\newtheorem*{acknowledgements}{Acknowledgements}
\theoremstyle{definition}
\begin{document}
	\title[Two generalizations of the ButterFly Theorem]{Two generalizations of the ButterFly Theorem}
	
	\author[Tran Quang Hung]{Tran Quang Hung}
	\address{High School for Gifted Students, Hanoi, Vietnam}
	\email{tranquanghung@hus.edu.vn}
	
	\author[Luis Gonz\'alez]{Luis Gonz\'alez}
	\address{Engineer at Houston, Texas}
	\email{luis240985@gmail.com}
	
	\keywords{Butterfly Theorem, quadrilateral, midpoint}
	\subjclass[2010]{51M04, 51N20}
	\maketitle

\begin{abstract}We establish two direct extensions to the Butterfly Theorem on the cyclic quadrilateral along with the proofs using the projective method and analytic geometry of the Cartesian coordinate system.
\end{abstract}

\section{Introduction}

We repeat the Butterfly Theorem expressed with the chord of the circle; see \cite{1,2,3,4,4a,4b}. This is an interesting and important theorem of plane Euclidean geometry. This classic theorem also has a lot of solutions; see \cite{1,2,3,4,4b}. Previously, the first author of this article also contributed a new proof to this theorem in \cite{5a}.

\begin{theorem}[Butterfly Theorem]Let $M$ be the midpoint of a chord $AB$ of a circle $\omega$, through $M$ two other chords $CD$ and $EF$ of $\omega$ are drawn. If $C$ and $F$ are on opposite sides of $AB$, and $CF$, $DE$ intersect $AB$ at $G$ and $H$ respectively, then $M$ is also the midpoint of $GH$.
\end{theorem}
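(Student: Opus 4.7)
The plan is to prove the Butterfly Theorem projectively, exploiting the classical fact that the cross-ratio of four concyclic points is the same when viewed from any fifth point on the circle. I would translate both the midpoint hypothesis on $AB$ and the midpoint conclusion into a single algebraic identity on the line $AB$, and then use two central projections onto $AB$ (from $D$ and from $F$) to link them.

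First I would set an affine coordinate on $AB$ with $M$ as the origin, so that $A=-1$ and $B=1$ (encoding the hypothesis that $M$ bisects $AB$), and let $g,h$ denote the coordinates of $G,H$; the task becomes $g+h=0$. Next, projecting the four concyclic points $A,B,C,E$ onto $AB$ from $F$: the chord $EF$ passes through $M$ and the line $CF$ meets $AB$ at $G$, so $(A,B,C,E)\mapsto(A,B,G,M)$. Projecting the same four points from $D$: the chord $CD$ passes through $M$ and the line $DE$ meets $AB$ at $H$, so $(A,B,C,E)\mapsto(A,B,M,H)$. Invariance of the cross-ratio under these two projections gives
\[
(A,B;G,M)=(A,B;M,H).
\]

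A direct computation in the coordinates above yields $(A,B;G,M)=(1+g)/(1-g)$ and $(A,B;M,H)=(1-h)/(1+h)$; equating and clearing denominators collapses immediately to $g+h=0$. The main obstacle is really a bookkeeping one: matching the order of $G,M$ on the $F$-side with the order of $M,H$ on the $D$-side so that the two cross-ratios are compared in the correct cyclic order, rather than as reciprocals of each other. The hypothesis that $C$ and $F$ lie on opposite sides of $AB$ plays only a nondegeneracy role, ensuring that $G$ and $H$ actually exist and are distinct from $A,B,M$, so that every cross-ratio above is well-defined.
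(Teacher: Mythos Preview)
Your argument is correct: projecting the four concyclic points $A,B,C,E$ onto the line $AB$ from $F$ and from $D$ gives, by the invariance of cross--ratio on a conic, $(A,B;G,M)=(A,B;C,E)=(A,B;M,H)$, and your coordinate computation $(1+g)/(1-g)=(1-h)/(1+h)\Rightarrow g+h=0$ is accurate. The ordering worry you flag is in fact a non--issue: in both projections the slot occupied by $C$ and the slot occupied by $E$ are preserved, so the two cross--ratios are literally equal, not reciprocal.

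Note, however, that the paper does \emph{not} supply its own proof of this classical statement; it is quoted as background with references, and the paper's work begins with the two generalizations (Theorems~\ref{thm1} and~\ref{thm2}). So there is no direct ``paper's proof'' to compare your argument against. The closest point of contact is the first proof of Theorem~\ref{thm1}: there the authors also argue projectively, but via a different mechanism. They show (using a perpendicularity lemma about Newton lines) that the transversal $QR$ is parallel to the line joining the two ``diagonal points'' $F=BC\cap DA$ and $G=CD\cap AB$, and then invoke the harmonic pencil $F(P,A;B,G)=-1$ to conclude that $P$ bisects $QR$. Specialized to the cyclic case, that route amounts to using the polar of $P$ rather than the cross--ratio on the conic. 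Your approach is self--contained on the circle and needs no auxiliary perpendicularity lemma; the paper's approach, by contrast, is designed to survive the passage to non--cyclic quadrilaterals, where your conic cross--ratio is no longer available.
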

\begin{figure}[htbp]
	\begin{center}\scalebox{0.7}{\includegraphics{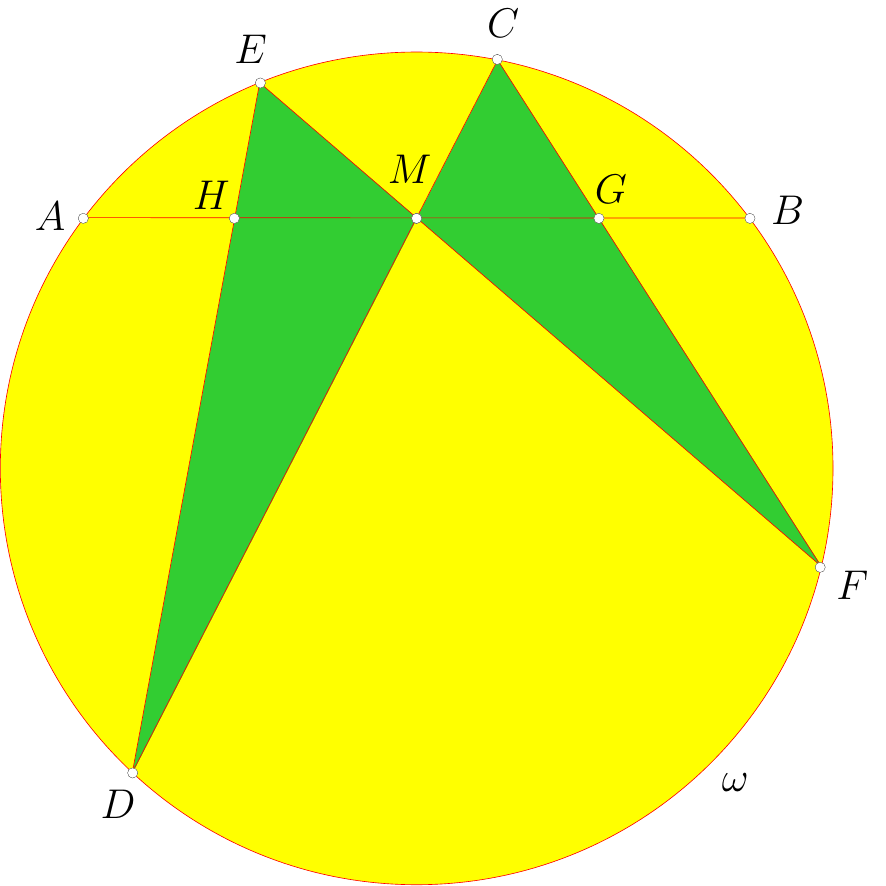}}\end{center}	
	\label{fig00}
	\caption{Butterfly Theorem for chord of circle}
\end{figure}

If $O$ is the center of $\omega$, the condition $M$ is the midpoint of $AB$ and can be changed into $AB$ is perpendicular to $OM$. Based on this property we may have a different version of Butterfly Theorem as follows

\begin{theorem}[Butterfly Theorem for cyclic quadrilateral]\label{thm0}Let $ABCD$ be a quadrilateral inscribed in circle $\omega$. Let $O$ be the center of $\omega$. Diagonals $AC$ and $BD$ meet at $P$. Perpendicular line from $P$ to $OP$ meets the side line $AB$ and $CD$ at $Q$ and $R$, respectively. Then, $P$ is the midpoint of $QR$.
\end{theorem}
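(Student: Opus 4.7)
My plan is to reduce Theorem \ref{thm0} to a projective incidence statement by first constructing the chord on which the classical Butterfly configuration lives. Let the line through $P$ perpendicular to $OP$ meet $\omega$ at two points $X$ and $Y$. Since $O$ is the center of $\omega$ and $OP \perp XY$, the foot of the perpendicular from $O$ onto the chord $XY$ is $P$ itself, so $P$ is the midpoint of $XY$.

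With $P$ the midpoint of the chord $XY$, and with $AC, BD$ two further chords of $\omega$ passing through $P$, the configuration is of Butterfly type. To extract $PQ=PR$ cleanly I would apply Desargues' involution theorem to the pencil of conics through the four points $A, B, C, D$ and intersect with the line $XY$. Three members of the pencil supply three explicit pairs on $XY$: the conic $\omega$ gives $\{X, Y\}$; the degenerate conic $AC \cup BD$ gives the double point $\{P, P\}$, since both components meet $XY$ at $P$; and the degenerate conic $AB \cup CD$ gives $\{Q, R\}$ by definition. Since $P$ is a fixed point of the Desargues involution and $\{X, Y\}$ is a pair symmetric about $P$, the involution must coincide with the central reflection about $P$ (an involution on a line being determined by any two of its pairs). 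Consequently the third pair $\{Q, R\}$ is also symmetric about $P$, i.e. $PQ = PR$.

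The main obstacle is the identification of the Desargues involution with the central reflection about $P$. The key observation is that the central reflection about $P$ is itself an involution on line $XY$, it fixes $P$, and it exchanges $X$ with $Y$ precisely because $P$ is the midpoint of $XY$; since the two involutions agree on these two pairs, they must coincide everywhere. If one prefers a more elementary route that sticks to the Butterfly Theorem as stated, the same conclusion follows by applying it to the chords $AC, BD$ through the midpoint $P$ of $XY$, provided one chooses the endpoint labelling so that the ``crossed'' pairing of endpoints is exactly $\{AB, CD\}$; a short case analysis on which pair of arcs of $\omega$ contains $X$ and $Y$ shows that in the remaining configuration the same conclusion can be recovered either by the companion form of the Butterfly Theorem or by the coordinate computation promised in the abstract.
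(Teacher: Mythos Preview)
Your Desargues involution argument is correct and complete: the pencil through $A,B,C,D$ cuts the line $XY$ in the three pairs $\{X,Y\}$, $\{P,P\}$, $\{Q,R\}$, and since the reflection in $P$ is a projective involution matching the first two pairs it must match the third. No case analysis is needed, so your final hedging paragraph about ``crossed'' labellings and a ``companion form'' is superfluous once you commit to the involution route.

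The paper, by contrast, does not give an independent proof of this theorem at all. It simply observes that the midpoint condition on a chord is equivalent to the perpendicularity condition $OP\perp XY$, so Theorem~\ref{thm0} is presented as a verbatim reformulation of the classical Butterfly Theorem (Theorem~1) with the chord $XY$ playing the role of $AB$; alternatively it is recovered as the cyclic special case of the paper's two generalizations, whose projective proofs ultimately rest on the harmonic pencil $F(P,A;B,G)=-1$ together with a parallelism established via the Newton line. Your approach is thus genuinely different: you bypass both the appeal to Theorem~1 and the Newton-line machinery, trading them for a single application of Desargues' involution. The payoff is a self-contained argument that is insensitive to which sides of $XY$ the various points lie on, exactly the configurational robustness you were worried about in your last paragraph. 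The cost is that Desargues' involution is a heavier projective tool than the elementary cross-ratio argument the paper uses for its generalizations.
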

\begin{figure}[htbp]
	\begin{center}\scalebox{0.7}{\includegraphics{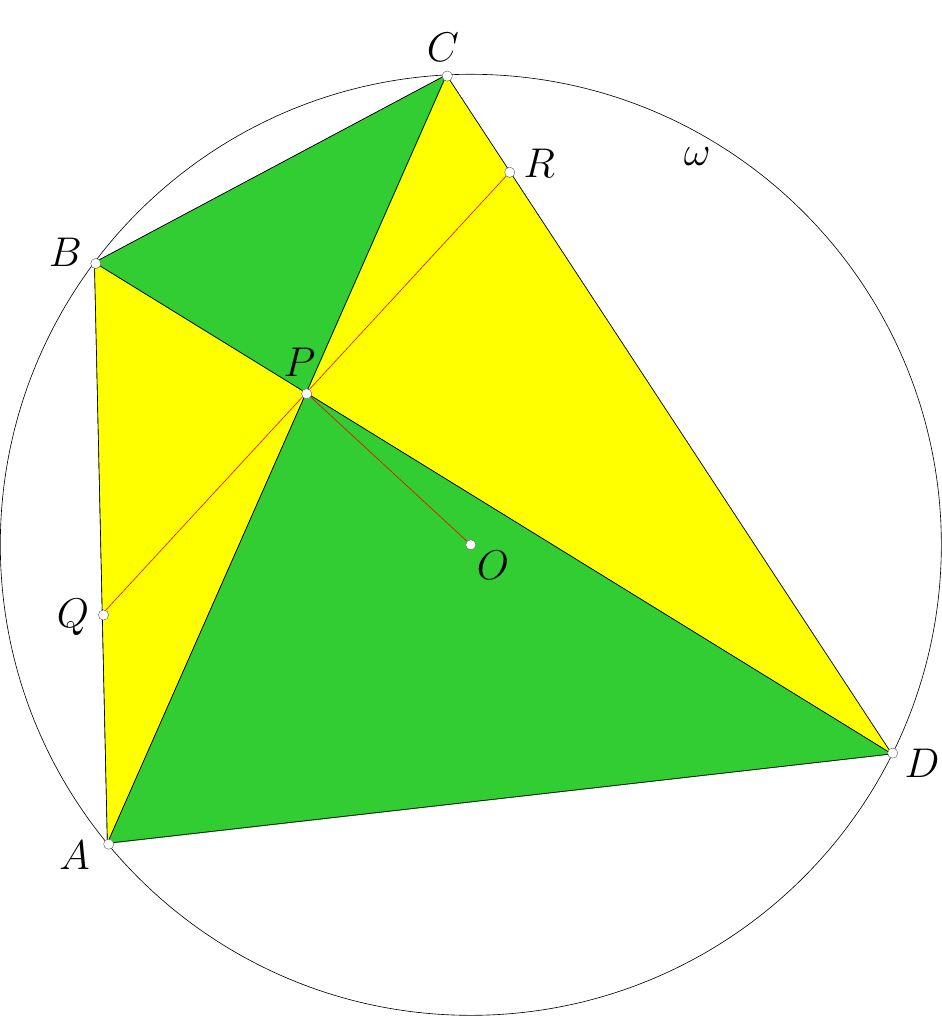}}\end{center}\label{fig0}
	\caption{Butterfly Theorem for cyclic quadrilateral}
\end{figure}

The Butterfly Theorem has a lot of extensions and generalizations, see \cite{4b,4c,6,7,8,9}. In this paper, we shall generalize the Theorem \ref{thm0} by replacing the cyclic quadrilateral into any quadrilateral. We shall have two generalizations of Theorem \ref{thm0} by this way. Where the quadrilateral is cyclic, we obtain Theorem \ref{thm0}. We also refer to the name as well as the properties of Quadrangle, Quadrilateral, and Quadrigon in \cite {4d,4e}.

\begin{theorem}[The first generalization of Butterfly Theorem]\label{thm1}Let $ABCD$ be an arbitrary quadrilateral. Diagonals $AC$ and $BD$ meet at $P$. Let $O_a$, $O_b$, $O_c$, and $O_d$ be the circumcenters of the triangles $BCD$, $CDA$, $DAB$, and $ABC$, respectively. Let $M$ and $N$ be the midpoints of segments $O_aO_c$ and $O_bO_d$, respectively. The perpendicular lines from $P$ to $MN$ meets lines $AB$ and $CD$ at $Q$ and $R$, respectively. Then, $P$ is the midpoint of $QR$.
\end{theorem}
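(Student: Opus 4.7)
The plan proceeds in two phases: a projective reduction, then a metric verification in Cartesian coordinates.

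\emph{Phase 1 (projective reduction).} Let $E=AB\cap CD$ and $F=AD\cap BC$, so that $PEF$ is the diagonal triangle of the complete quadrilateral $ABCD$. The classical harmonic property of a complete quadrilateral asserts that for any line $\ell$ through $P$, if $\ell$ meets $AB$, $CD$, and $EF$ at $Q$, $R$, $S$ respectively, then $(Q,R;P,S)=-1$. Therefore $P$ is the midpoint of $QR$ if and only if $S$ lies at infinity, i.e., if and only if $\ell\parallel EF$. Applied to the line $\ell$ perpendicular to $MN$ at $P$, this reduces Theorem \ref{thm1} to the purely metric claim
\[
MN\perp EF.
\]

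\emph{Phase 2 (coordinate verification).} The crucial geometric observation is that $O_a$ and $O_c$ are each equidistant from $B$ and $D$, so $O_aO_c$ is the perpendicular bisector of $BD$; likewise $O_bO_d$ is the perpendicular bisector of $AC$. Hence $M$ lies on the perpendicular bisector of $BD$ and $N$ on that of $AC$.

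I would set up Cartesian coordinates with $P$ at the origin and the diagonals directed along $\vec{u}=(1,0)$ and $\vec{v}=(\cos\theta,\sin\theta)$, writing $A=-a\vec{u}$, $C=c\vec{u}$, $B=b\vec{v}$, $D=-d\vec{v}$ with $a,b,c,d>0$. Then $O_a$ arises as the intersection of the known perpendicular bisector of $BD$ with the perpendicular bisector of $CD$, and $O_c$ as the intersection of the perpendicular bisector of $BD$ with that of $AB$; averaging produces $M$, and a symmetric computation yields $N$. The points $E$ and $F$ are immediate. It then remains to verify the single identity $(M-N)\cdot(E-F)=0$.

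The main obstacle is entirely computational: the circumcenters carry denominators depending on all five parameters $a,b,c,d,\theta$. A streamlining trick is to write $M=\tfrac12(B+D)+\mu\,\vec{v}^{\perp}$ and $N=\tfrac12(A+C)+\nu\,\vec{u}^{\perp}$ and to identify the two scalars $\mu,\nu$ as compact rational expressions in $a,b,c,d,\theta$. With this parametrization, the target orthogonality should collapse to a symmetric bilinear relation that can be checked by direct expansion.
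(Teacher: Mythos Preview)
Your Phase~1 is correct and coincides with the paper's own first proof: the paper lets $F=BC\cap DA$, $G=CD\cap AB$, invokes the harmonic pencil of the complete quadrangle to conclude that $P$ bisects $QR$ as soon as $QR\parallel FG$, and thereby reduces the theorem to the perpendicularity $MN\perp FG$ (your $EF$).

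The divergence is in how that perpendicularity is established. The paper does not compute: it observes that $M$ and $N$ are the midpoints of the diagonals of the quadrilateral $O_aO_bO_cO_d$, so $MN$ is its Newton line, and then appeals to a standing lemma (Lemma~\ref{lem2} in the paper) asserting that if two quadrilaterals have all corresponding sides and diagonals pairwise perpendicular, then the line joining the two opposite-side intersections of one is perpendicular to the Newton line of the other. The six relations $AB\perp O_cO_d$, $BC\perp O_dO_a$, $CD\perp O_aO_b$, $DA\perp O_bO_c$, $BD\perp O_aO_c$, $AC\perp O_bO_d$ put $ABCD$ and $O_aO_bO_cO_d$ exactly in this configuration, yielding $FG\perp MN$ synthetically. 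The paper's second proof, by contrast, skips the projective reduction altogether and verifies the midpoint statement directly in coordinates.

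Your proposal is therefore a hybrid of the two: the paper's projective reduction followed by a coordinate attack on the residual identity $(M-N)\cdot(E-F)=0$. The plan is sound, and your parametrization $M=\tfrac12(B+D)+\mu\,\vec v^{\perp}$, $N=\tfrac12(A+C)+\nu\,\vec u^{\perp}$ (exploiting that $O_aO_c$ and $O_bO_d$ are the perpendicular bisectors of $BD$ and $AC$) is a genuine simplification over computing the four circumcenters separately. But as written, Phase~2 stops at ``should collapse''; to make the argument complete you still have to produce $\mu$ and $\nu$ explicitly and carry out the expansion, or else replace this step by the Newton-line lemma as the paper does.
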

\begin{figure}[htbp]
	\begin{center}\scalebox{0.7}{\includegraphics{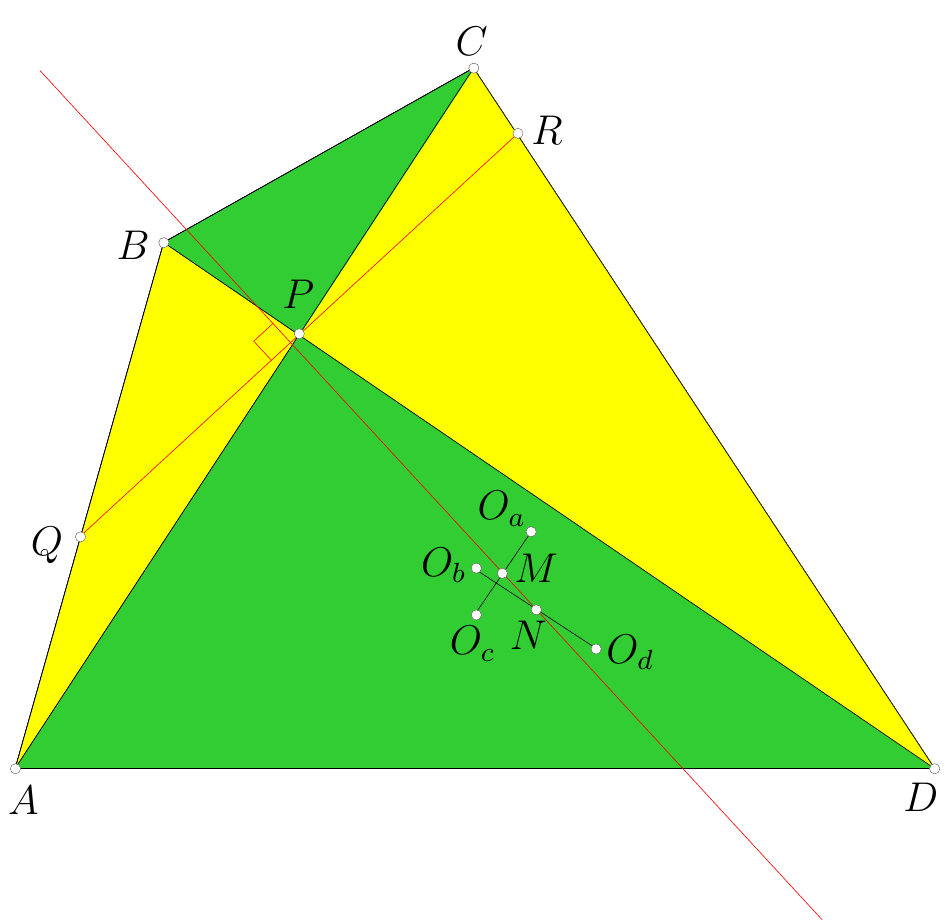}}\end{center}	
	\label{fig1}
	\caption{The first generalization of Butterfly Theorem}
\end{figure}

\begin{theorem}[The second generalization of Butterfly Theorem]\label{thm2}Let $ABCD$ be an arbitrary quadrilateral. Diagonals $AC$ and $BD$ meet at $P$. Perpendicular bisectors of two segments $AC$ and $BD$ meet at $X$. Perpendicular bisectors of two segments $AB$ and $CD$ meet at $Y$. Perpendicular bisectors of two segments $AD$ and $BC$ meet at $Z$. Construct a parallelogram $XYWZ$. Perpendicular line to $PW$ passes through $P$ which meets the sides $AD$ and $BC$ at $Q$ and $R$, respectively. Then, $P$ is the midpoint of $QR$.
\end{theorem}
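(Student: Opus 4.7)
The plan is to reduce Theorem~\ref{thm2} to Theorem~\ref{thm1} via an auxiliary parallelism, and then to establish that parallelism by a direct coordinate computation.

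First, I would exploit a symmetry of Theorem~\ref{thm1}. Relabel $A'B'C'D' := ADCB$; its diagonals are still $AC$ and $BD$, meeting at the same point $P$, but its sides are now $AD, CD, BC, AB$. The four triangles $B'C'D', C'D'A', D'A'B', A'B'C'$ coincide with $BCD, ABC, DAB, ACD$, so the new circumcenters $O_{a'}, O_{b'}, O_{c'}, O_{d'}$ are $O_a, O_d, O_c, O_b$ respectively. Consequently the midpoints $M' = (O_{a'}+O_{c'})/2 = M$ and $N' = (O_{b'}+O_{d'})/2 = N$ are unchanged. Applying Theorem~\ref{thm1} to $A'B'C'D'$ therefore gives that the perpendicular from $P$ to the same line $MN$ meets the sides $A'B' = AD$ and $C'D' = BC$ at points equidistant from $P$. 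Hence it suffices to prove the parallelism
\[
PW \parallel MN.
\]

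Second, I would set up Cartesian coordinates with $P$ at the origin, $A = (a,0)$, $C = (-c,0)$, $B = b(p,q)$, $D = -d(p,q)$, with $p^{2}+q^{2}=1$ and $a,b,c,d>0$. Each of the six perpendicular bisectors has an affine equation of the form $T\cdot(U-V) = (|U|^{2} - |V|^{2})/2$, so the points $X$, $Y$, $Z$ are obtained by solving three $2\times 2$ linear systems. A useful preliminary observation is that $O_{a}$ and $O_{c}$ both lie on the perpendicular bisector of $BD$ (as circumcenters of triangles sharing that side); hence the line $O_{a}O_{c}$ coincides with this perpendicular bisector and $M$ lies on it. Symmetrically, $N$ lies on the perpendicular bisector of $AC$. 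These constraints pin $M$ and $N$ down cleanly, and a short computation gives $\overrightarrow{MN}$ proportional to
\[
\bigl(\,bdq(a-c),\; ac(b-d) - bdp(a-c)\,\bigr).
\]

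Third, I would compute $W = Y + Z - X$ from the explicit formulas for $X$, $Y$, $Z$ and verify that $\overrightarrow{PW}$ has the same direction as that displayed above. The principal obstacle is the algebra: the formulas for $Y$ and $Z$ carry denominators $(bc-ad)$ and $(ab-cd)$ respectively, and combining them with $X$ requires extracting a common factor $(ac-bd)$ from each coordinate of $W$ (this factor vanishing exactly when $ABCD$ is cyclic, i.e.\ when $PA\cdot PC = PB\cdot PD$, a case in which both $\overrightarrow{MN}$ and $\overrightarrow{PW}$ degenerate and the statement reduces to Theorem~\ref{thm0}). A numerical check such as $a=2,\,c=1,\,b=3,\,d=1,\,(p,q)=(0,1)$, which yields $W = (21/10,\,14/5)$ and $\overrightarrow{MN}$ of direction $(3,4)$, confirms both the factorization and the parallelism. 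Once $PW \parallel MN$ is verified, the first step completes the proof.
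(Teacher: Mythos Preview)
Your proposal is correct, and the reduction via relabeling is genuinely different from both proofs in the paper.

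The paper gives two arguments. Its first proof is a head-on coordinate computation: compute $X$, $Y$, $Z$, then $W$, then the perpendicular from $P$ to $PW$, and finally the intersections with $AD$ and $BC$. Its second proof aims to show $PW\perp UV$ (where $U=AD\cap BC$, $V=AB\cap CD$) by way of an auxiliary coaxial-circles lemma about the circles on diameters $O_aO_c$, $O_bO_d$, and then finishes with the harmonic-pencil step. Your route is a third one: the relabeling $A'B'C'D'=ADCB$ shows that Theorem~\ref{thm1}, applied to the same pair $(M,N)$, already gives the midpoint conclusion on the pair of sides $AD$, $BC$; so Theorem~\ref{thm2} reduces to the single parallelism $PW\parallel MN$. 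That observation is not made explicitly in the paper, although it is latent there: in the paper's coordinates the perpendicular from $P$ to $MN$ (equation~\eqref{eq7}) and the perpendicular from $P$ to $PW$ (equation~\eqref{eq18}) turn out to be the \emph{same} line, which is exactly your $PW\parallel MN$.

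What each approach buys: your reduction makes the logical dependence of Theorem~\ref{thm2} on Theorem~\ref{thm1} transparent and isolates the new content as a clean parallelism; the paper's second proof trades that for a coaxial-circles lemma that is interesting in its own right; the paper's first proof is self-contained but hides the link between the two theorems. On the computational side you do not save much, since computing $W=Y+Z-X$ is the bottleneck in all coordinate approaches; but once $W$ is in hand, checking $\det\bigl(\overrightarrow{PW},\overrightarrow{MN}\bigr)=0$ is no harder than the paper's final steps. Your numerical instance $(a,b,c,d)=(2,3,1,1)$, $(p,q)=(0,1)$ indeed gives $W=(21/10,14/5)$ and $M=(5/8,1)$, $N=(1/2,5/6)$, both of direction $(3,4)$, so the claimed parallelism is genuine; the remaining task is just the symbolic verification, and your remark about the common factor $ac-bd$ (the cyclic case) is the right guide for organizing it.
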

\begin{figure}[htbp]
	\begin{center}\scalebox{0.7}{\includegraphics{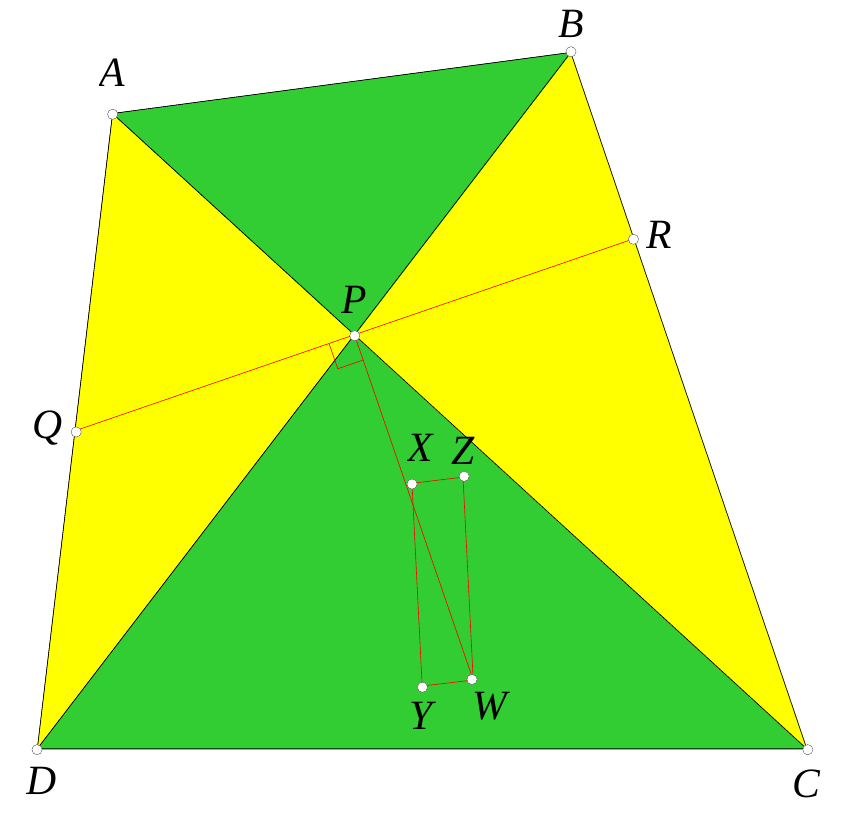}}\end{center}	
	\label{fig2}
	\caption{The second generalization of Butterfly Theorem}
\end{figure}
We give two proofs to both Theorem \ref{thm1} and Theorem \ref{thm2}. One of which is an elegant proof using projective geometry. For the remaining solutions, we use analytic geometry of the Cartesian coordinate system, if we have chosen the appropriate coordinate system, then we prove both theorems by other concise numerical methods. Obviously, the selection of the appropriate axis system is also a very interesting illustration showing the power of using the Cartesian coordinate system in proving complex geometrical theorems without using powerful tools of the number. The idea of Ren\'e Descartes in coordinate geometry is really very great; see \cite{0,0a}.

\section{Proofs of Theorems}
	
For the first proof of Theorem \ref{thm1}, we shall use the following lemmas

\begin{lemma}[AMM Problem 12147 \cite{4}]\label{lem1}$ABCD$ is an arbitrary quadrilateral. Perpendicular bisectors of $AB$, $CD$ meet at $X$ and the perpendicular bisectors of $BC$, $DA$ meet at $Y$. Then $XY$ is perpendicular to the Newton line of $ABCD$.
\end{lemma}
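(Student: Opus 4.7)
The plan is a direct vector computation. Place an arbitrary origin and identify each labeled point with its position vector; then the Newton line of $ABCD$ passes through the midpoints $M_{1}=\tfrac{1}{2}(A+C)$ and $M_{2}=\tfrac{1}{2}(B+D)$, so its direction is the vector $\mathbf{n}:=A+C-B-D$. The whole lemma therefore collapses to proving the single scalar identity $(X-Y)\cdot\mathbf{n}=0$.

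The key step is to convert each perpendicular-bisector membership into a linear equation via the standard identity
\[
|P-U|^{2}=|P-V|^{2}\iff 2P\cdot(V-U)=|V|^{2}-|U|^{2}.
\]
Applied to $X$ on the perpendicular bisectors of $AB$ and $CD$, the two resulting equations add (observing that $(B-A)+(D-C)=-\mathbf{n}$) to give
\[
2X\cdot\mathbf{n}=|A|^{2}+|C|^{2}-|B|^{2}-|D|^{2}.
\]
The identical calculation for $Y$, using the bisectors of $BC$ and $DA$ and the fact that $(C-B)+(A-D)=+\mathbf{n}$, yields exactly the same right-hand side:
\[
2Y\cdot\mathbf{n}=|A|^{2}+|C|^{2}-|B|^{2}-|D|^{2}.
\]
Subtracting the two equations annihilates the right-hand sides and delivers $(X-Y)\cdot\mathbf{n}=0$, which is the required perpendicularity.

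There is no real obstacle here; the proof is essentially a three-line dot-product computation. The one observation that makes everything work is that both natural pairings of opposite sides, $\{AB,CD\}$ and $\{BC,DA\}$, produce combined direction vectors equal (up to sign) to $\mathbf{n}=A+C-B-D$, and both produce the \emph{same} squared-length difference on the right. Spotting this symmetric grouping is the only creative step; once in place, the calculation is forced. A purely synthetic route through the Miquel/isogonal circle of the complete quadrilateral is conceivable, but the vector approach is vastly cleaner and does not depend on any nondegeneracy hypothesis beyond $\mathbf{n}\neq 0$ (equivalently, $ABCD$ not a parallelogram, in which case the Newton line degenerates and the statement is vacuous).
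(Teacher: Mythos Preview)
Your proof is correct. The key identity $|P-U|^{2}=|P-V|^{2}\iff 2P\cdot(V-U)=|V|^{2}-|U|^{2}$ is applied correctly to all four perpendicular bisectors, and the crucial observation that the two edge pairings $\{AB,CD\}$ and $\{BC,DA\}$ both yield $\pm\mathbf{n}$ as the combined direction and the same squared-length combination $|A|^{2}+|C|^{2}-|B|^{2}-|D|^{2}$ on the right is exactly what makes the subtraction work. The degeneracy remark about $\mathbf{n}=0$ is also appropriate.

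As for comparison: the paper does not actually prove this lemma. It is stated as Lemma~1 with the attribution ``AMM Problem 12147'' and cited as a known result, then used as a black box in the first proof of Theorem~3. So there is no approach in the paper to compare against; your three-line vector argument stands on its own and is arguably the cleanest possible proof of the statement.
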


\begin{lemma}\label{lem2}Let $ABCD$ and $PQRS$ be two quadrilaterals in the plane such that $PQ\perp AB$, $QR \perp BC$, $RS \perp CD$, $SP \perp DA$, $PR \perp BD$, $SQ\perp AC$, and $PR\perp BD$. Let $J$ be the intersection point of lines $PQ$ and $RS$. Let $K$ be the intersection point of lines $QR$ and $SP$. Then, $JK$ is perpendicular to the Newton line of $ABCD$.
\end{lemma}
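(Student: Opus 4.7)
The plan is to reduce Lemma 2 to Lemma 1 in two stages. First, I would verify the conclusion for one specific admissible $PQRS$ — the quadrilateral formed from the four triangle-circumcenters of $ABCD$ — by a direct appeal to Lemma 1. Second, I would show that the direction of the line $JK$ is the same for every admissible $PQRS$, so the perpendicularity to the Newton line propagates from that special case to all admissible quadrilaterals.

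For the first stage, let $O_a, O_b, O_c, O_d$ denote the circumcenters of $\triangle BCD$, $\triangle CDA$, $\triangle DAB$, $\triangle ABC$, and set $P = O_c$, $Q = O_d$, $R = O_a$, $S = O_b$. Two consecutive circumcenters in this cyclic listing lie on the perpendicular bisector of the side shared by their triangles, so $O_c O_d$ is the perpendicular bisector of $AB$, hence $O_c O_d \perp AB$, and similarly $O_d O_a \perp BC$, $O_a O_b \perp CD$, $O_b O_c \perp DA$; further, $O_c$ and $O_a$ are the circumcenters of the two triangles sharing the vertex pair $B, D$, so $O_c O_a$ is the perpendicular bisector of $BD$ and $O_c O_a \perp BD$, and analogously $O_d O_b \perp AC$. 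This $PQRS$ therefore meets all six perpendicularity hypotheses, and for it the point $J$ is the intersection $X$ of the perpendicular bisectors of $AB$ and $CD$ while $K$ is the intersection $Y$ of the perpendicular bisectors of $BC$ and $DA$. Lemma 1 applied to $ABCD$ then yields that $JK = XY$ is perpendicular to the Newton line of $ABCD$.

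For the second stage I would exhibit two admissibility-preserving operations on $PQRS$ whose joint orbit fills the admissible family and each of which leaves the direction of $JK$ unchanged. Translating $PQRS$ by any vector $\vec{v}$ preserves every perpendicularity and sends $J, K$ to $J + \vec{v}$ and $K + \vec{v}$. A homothety $h_\kappa$ with center $P$ and any nonzero ratio $\kappa$ also preserves admissibility, since every side and diagonal of $PQRS$ retains its direction; because $PQ$ and $SP$ both pass through the center $P$ they are fixed as sets, which forces the new $J$ to equal $h_\kappa(J)$ and the new $K$ to equal $h_\kappa(K)$, so $J_{\text{new}} - K_{\text{new}} = \kappa(J - K)$. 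A dimension count — eight real parameters in $(P, Q, R, S)$ minus five independent perpendicularity conditions, the sixth being a consequence of the other five via the unique real null barycentric relation $\alpha A + \beta B + \gamma C + \delta D = 0$, $\alpha + \beta + \gamma + \delta = 0$, satisfied by any four coplanar points — shows the admissible family is three-dimensional, matching the three parameters (two translational and one homothetic) of the group action; since the stabilizer is trivial on a generic admissible $PQRS$, the orbit of the circumcenter quadrilateral fills the family, so every admissible $PQRS$ has its line $JK$ parallel to the $XY$ of Stage 1.

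The main technical obstacle is verifying the rank deficiency (one dependent perpendicularity among the six), which underlies both the three-dimensional family and the transitivity of the translation-plus-homothety action; this reduces to a short linear-algebra computation using the null barycentric relation. If this route feels indirect, one can bypass the family argument entirely with a direct complex-coordinate calculation: after normalizing $A = 0$ and parameterizing the single surviving real scalar in terms of $\alpha, \beta, \gamma, \delta$, one writes $J - K$ in closed form and finds that it factors as a real scalar multiple of $i(B + D - A - C)$, which is perpendicular to the Newton line direction $(B + D - A - C)/2$, concluding Lemma 2 at once.
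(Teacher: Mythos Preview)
The paper states Lemma~2 without proof (it is listed alongside Lemma~1 and then invoked in the first proof of Theorem~\ref{thm1}), so there is no argument in the paper to compare yours against. Your Stage~1 reduction to Lemma~1 via the circumcenter quadrilateral $O_cO_dO_aO_b$ is correct and, incidentally, is exactly the instance of Lemma~2 that the paper actually uses.

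Stage~2 is sound in outline, but two points want tightening. First, the dependency among the six conditions does not rely on any barycentric relation for $A,B,C,D$: the alternating sum of the six dot products $(Q{-}P)\cdot(B{-}A)$, $(R{-}Q)\cdot(C{-}B)$, $(S{-}R)\cdot(D{-}C)$, $(P{-}S)\cdot(A{-}D)$, $(R{-}P)\cdot(D{-}B)$, $(Q{-}S)\cdot(C{-}A)$ vanishes \emph{identically} for arbitrary eight points, which already forces rank at most five. Second, equal dimensions together with a trivial stabilizer only give an open orbit, not the whole admissible set; to close this, observe that rotating any admissible $PQRS$ through $90^{\circ}$ produces a quadrilateral whose four sides and two diagonals are each parallel to the corresponding edges of the rotated circumcenter quadrilateral, and two quadrilaterals with all six corresponding edges pairwise parallel are homothetic (apply the triangle case to the two halves cut by a common diagonal). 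With those two fixes your argument is complete. The closing complex-number computation you sketch, showing that $J-K$ is a real multiple of $i(B{+}D{-}A{-}C)$, is a legitimate independent route and would give a short self-contained proof if written out.
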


\begin{proof}[The first proof of Theorem \ref{thm1}]Let $F \equiv BC \cap DA$ and $G \equiv CD \cap AB$. Since $AB \perp O_cO_d$, $BC \perp O_dO_a$, $CD \perp O_aO_b$, $DA \perp O_bO_c$, $BD \perp O_aO_c$, and $AC\perp O_bO_d$ then using Lemma \ref{lem2} on quadrilateral $O_aO_bO_cO_d$, we get that $GF$ is perpendicular to its Newton line $MN$ so that $QR \parallel FG$. Since $F(P,A;B,G)=F(P,R;Q,G)=-1$ and $QR \parallel FG$, it follows that $P$ is midpoint of $QR$ (following the properties of harmonic pencil in \cite{3}). We complete the solution.
\end{proof}

We shall use the Cartesian coordinate system for the second proof of Theorem \ref{thm1}.
\begin{figure}[htbp]
	\begin{center}\scalebox{0.7}{\includegraphics{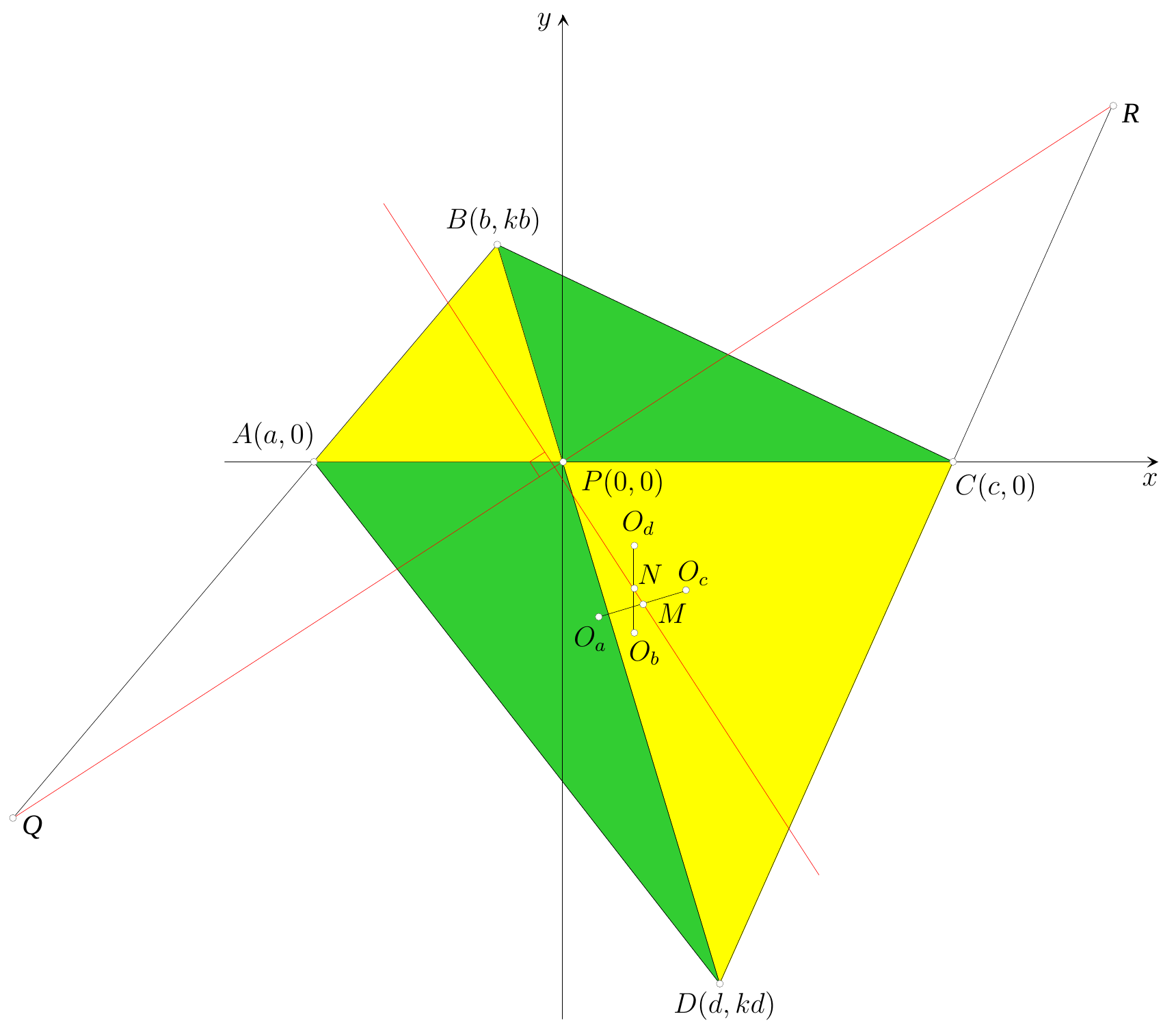}}\end{center}	
	\label{fig6}
	\caption{The second proof of Theorem \ref{thm1} using Cartesian coordinate system}
\end{figure}
\begin{proof}[The second proof of Theorem \ref{thm1}]Choosing Cartesian coordinate system such that $P(0,0)$, $A(a,0)$, $B(b,kb)$, $C(c,0)$, and $D(d,kd)$, it is easily seen that $P$ is intersection of two diagonals $AC$ and $BD$. We now compute the coordinates of some points.

Circumcenter $O_a$ of triangle $BCD$ is intersection of perpendicular bisector of $BC$, $BD$
\begin{equation}\label{eq1}
O_a = \left(\frac{b d k^{2} + b d + c^{2}}{2 c}, \frac{b c k^{2} + b c - b d k^{2} - b d - c^{2} + c d k^{2} + c d}{2 c k} \right).
\end{equation}
Circumcenter $O_b$ of triangle $CDA$ is intersection of perpendicular bisector of $CD$, $CA$
\begin{equation}\label{eq2}
O_b = \left(\frac{a+c}{2}, \frac{a c - a d - c d + d^{2} k^{2} + d^{2}}{2 d k} \right).
\end{equation}
Circumcenter $O_c$ of triangle $DAB$ is intersection of perpendicular bisector of $DA$, $DB$
\begin{equation}\label{eq3}
O_c = \left(\frac{a^{2} + b d k^{2} + b d}{2 a}, \frac{-a^{2} + a b k^{2} + a b + a d k^{2} + a d - b d k^{2} - b d}{2 a k} \right).
\end{equation}
Circumcenter $O_d$ of triangle $ABC$ is intersection of perpendicular bisector of $AB$, $AC$
\begin{equation}\label{eq4}
O_d = \left(\frac{a+c}{2}, \frac{-a b + a c + b^{2} k^{2} + b^{2} - b c}{2 b k} \right).
\end{equation}
From \eqref{eq1}, \eqref{eq2}, \eqref{eq3}, and \eqref{eq4}, midpoints $M$ and $N$ of $O_aO_c$ and $O_bO_d$, respectively, have the coordinates
\begin{equation}\label{eq5}M = \left(\frac{a^{2} c + a b d k^{2} + a b d + a c^{2} + b c d k^{2} + b c d}{4 a c}, \frac{\splitfrac{-a^{2} c + 2 a b c k^{2} + 2 a b c - a b d k^{2} - a b d}{ - a c^{2} + 2 a c d k^{2} + 2 a c d - b c d k^{2} - b c d}}{4 a c k} \right),
\end{equation}
\begin{equation}\label{eq6}N = \left(\frac{a + c}{2}, \frac{a b c - 2 a b d + a c d + b^{2} d k^{2} + b^{2} d - 2 b c d + b d^{2} k^{2} + b d^{2}}{4 b d k} \right).
\end{equation}
From \eqref{eq5} and \eqref{eq6}, the perpendicular from $P(0,0)$ to line $MN$ has equation
\begin{equation}\label{eq7}y = \frac{-a b d k - b c d k}{a b c - a b d + a c d - b c d} x.
\end{equation}
Lines $BC$ and $AD$ have equation
\begin{equation}\label{eq8}
(AB):\ y = a b \frac{k}{a - b} - b k \frac{x}{a - b}.
\end{equation}
\begin{equation}\label{eq9}
(CD):\ y = c d \frac{k}{c - d} - d k \frac{x}{c - d}.
\end{equation}
From \eqref{eq7}, \eqref{eq8}, and \eqref{eq9}, we obtain the intersection $Q$ and $R$ of the perpendicular from $P$ to line $MN$ with the lines $AB$ and $CD$, respectively, have the coordinates
\begin{equation}\label{eq10}
Q = \left(\frac{-a b c + a b d - a c d + b c d}{a d - b c}, \frac{a b d k + b c d k}{a d - b c} \right),\end{equation}
\begin{equation}\label{eq11}
R = \left(\frac{a b c - a b d + a c d - b c d}{a d - b c}, \frac{-a b d k - b c d k}{a d - b c} \right).\end{equation}
Finally, from \eqref{eq10} and \eqref{eq11}, we easily check $P$ is the midpoint of $QR$. This completes the second proof.
\end{proof}

Using the Cartesian coordinate system again, we give proof of Theorem \ref{thm2}.
\begin{figure}[htbp]
	\begin{center}\scalebox{0.7}{\includegraphics{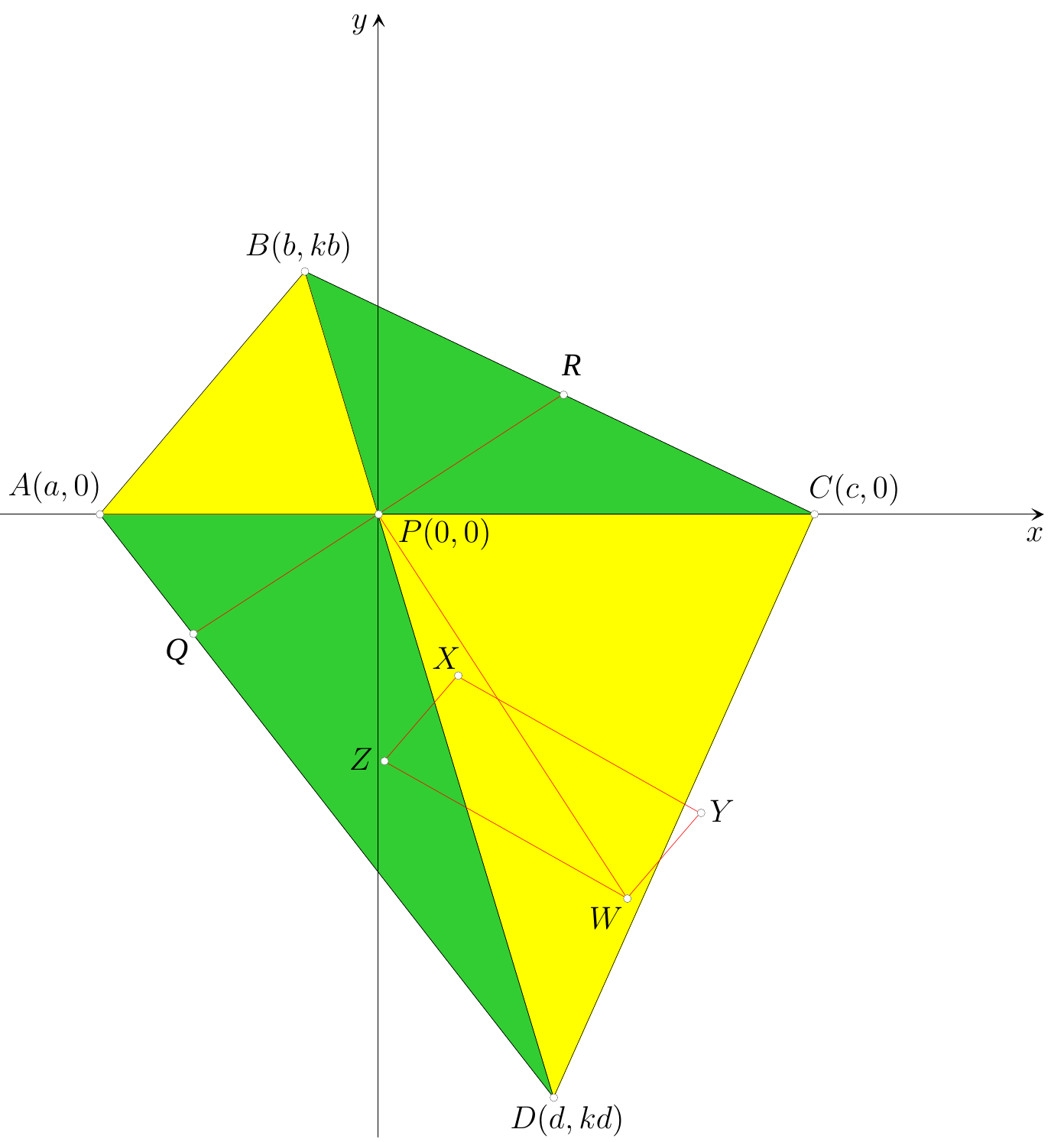}}\end{center}	
	\label{fig7}
	\caption{Proof of Theorem \ref{thm2} using Cartesian coordinate system}
\end{figure}
\begin{proof}[The first proof of Theorem \ref{thm2}]Choosing the Cartesian coordinate system such that $P(0,0)$, $A(a,0)$, $B(b,kb)$, $C(c,0)$, and $D(d,kd)$, we easily seen $P$ is intersection of two diagonals $AC$ and $BD$. We now compute the coordinates of some points.

Perpendicular bisectors of $AC$ and $BD$ meet at $X$ which has coordinates
\begin{equation}\label{eq12}X = \left(\frac{a+c}{2}, \frac{-a + bk^{2} + b - c + dk^{2} + d}{2k} \right).
\end{equation}
Perpendicular bisectors of $AB$ and $CD$ meet at $Y$ which has coordinates
\begin{equation}\label{eq13}Y = \left(\frac{a^{2}d - b^{2}dk^{2} - b^{2}d - bc^{2} + bd^{2}k^{2} + bd^{2}}{2ad - 2bc}, \frac{\splitfrac{a^{2}c - a^{2}d - ac^{2} + ad^{2}k^{2} + ad^{2} - b^{2}ck^{2}}{ - b^{2}c + b^{2}dk^{2} + b^{2}d + bc^{2} - bd^{2}k^{2} - bd^{2}}}{2adk - 2bck} \right).
\end{equation}
Perpendicular bisectors of $AD$ and $BC$ meet at $Z$ which has coordinates
\begin{equation}\label{eq14}Z = \left(\frac{a^{2}b + b^{2}dk^{2} + b^{2}d - bd^{2}k^{2} - bd^{2} - c^{2}d}{2ab - 2cd}, \frac{\splitfrac{-a^{2}b + a^{2}c + ab^{2}k^{2} + ab^{2} - ac^{2}- b^{2}dk^{2}}{ - b^{2}d + bd^{2}k^{2} + bd^{2} + c^{2}d - cd^{2}k^{2} - cd^{2}}}{2abk - 2cdk} \right).
\end{equation}
From \eqref{eq12}, \eqref{eq13}, and \eqref{eq14}, the vertex $W$ of parallelogram $XYWZ$ has coordinates
\begin{equation}\label{eq15}W = \left(f(a,b,c,d,k),g(a,b,c,d,k)\right)
\end{equation}
where
\begin{equation}\label{eq16}f(a,b,c,d,k)=\frac{\splitfrac{a^{3}bd - a^{2}bcd - ab^{3}dk^{2} - ab^{3}d + 2ab^{2}d^{2}k^{2} + 2ab^{2}d^{2} - abc^{2}d - abd^{3}k^{2}}{ - abd^{3} - b^{3}cdk^{2} - b^{3}cd + 2b^{2}cd^{2}k^{2} + 2b^{2}cd^{2} + bc^{3}d - bcd^{3}k^{2} - bcd^{3}}}{2a^{2}bd - 2ab^{2}c - 2acd^{2} + 2bc^{2}d},
\end{equation}
and
\begin{equation}\label{eq17}
g(a,b,c,d,k)=\frac{\begin{aligned}
&a^{3}bc - a^{3}bd + a^{3}cd - 2a^{2}bc^{2} + a^{2}bcd
- 2a^{2}c^{2}d - ab^{3}ck^{2}- ab^{3}c\\ 
&+ ab^{3}dk^{2} + ab^{3}d + ab^{2}cdk^{2} + ab^{2}cd - 2ab^{2}d^{2}k^{2}- 2ab^{2}d^{2} + abc^{3}\\ 
&+ abc^{2}d + abcd^{2}k^{2} + abcd^{2}+ abd^{3}k^{2}+ abd^{3} + ac^{3}d - acd^{3}k^{2} - acd^{3}\\ 
&+ b^{3}cdk^{2} + b^{3}cd- 2b^{2}cd^{2}k^{2} - 2b^{2}cd^{2} - bc^{3}d + bcd^{3}k^{2} + bcd^{3}\end{aligned}}{2a^{2}bdk - 2ab^{2}ck - 2acd^{2}k + 2bc^{2}dk}.
\end{equation}
From \eqref{eq15}, \eqref{eq16}, and \eqref{eq17}, we get equation of perpendicular line from $P$ to line $PW$ as follows
\begin{equation}\label{eq18}
y = \frac{-abdk - bcdk}{abc - abd + acd - bcd}x.
\end{equation}
Thus intersection points of this line with lines $AD$ and $BC$ have coordinates
\begin{equation}\label{eq19}
Q = \left(\frac{-abc + abd - acd + bcd}{ab - cd}, \frac{abdk + bcdk}{ab - cd} \right)
\end{equation}
and
\begin{equation}\label{eq20}
R = \left(\frac{abc - abd + acd - bcd}{ab - cd}, \frac{-abdk - bcdk}{ab - cd} \right).
\end{equation}
From \eqref{eq19} and \eqref{eq20}, we easily check $P$ is the midpoint of $QR$. This completes the proof.
\end{proof}

The second proof of Theorem \ref{thm2} is based on the idea of the first proof of Theorem \ref{thm1}. We need a lemma

\begin{lemma}\label{lem3}Let $ABCD$ be an arbitrary quadrilateral. Diagonals $AC$ and $BD$ meet at $P$. Let $O_a$, $O_b$, $O_c$, and $O_d$ be the circumcenters of the triangles $BCD$, $CDA$, $DAB$, and $ABC$, respectively. Let $M$ and $N$ be the midpoints of diagonals $AC$ and $BD$, respectively. Then, the circles with diameters $O_aO_c$, $O_bO_d$, and the circumcircle of triangle $PMN$ are coaxial.
\end{lemma}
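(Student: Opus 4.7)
Write $\Gamma_1$, $\Gamma_2$, $\Gamma_3$ for the three circles of the lemma---the circles with diameters $O_aO_c$ and $O_bO_d$ and the circumcircle of $PMN$. My plan is to prove their coaxiality by the same Cartesian coordinate method the authors use throughout Section 2. I will place $P$ at the origin with the diagonal $AC$ on the $x$-axis, so $P(0,0)$, $A(a,0)$, $C(c,0)$, $B(b,kb)$, $D(d,kd)$, and reuse the closed-form expressions for the circumcenters $O_a, O_b, O_c, O_d$ given in \eqref{eq1}--\eqref{eq4}. The midpoints are then immediate: $M=((a+c)/2,\,0)$ and $N=((b+d)/2,\,k(b+d)/2)$. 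I write each of the three circles in the form $x^2+y^2+D_ix+E_iy+F_i=0$: for $i=1,2$ the coefficients come from the standard diameter-form equation of a circle, while $F_3=0$ (since $P\in\Gamma_3$) and $D_3,E_3$ are fixed by $\Gamma_3(M)=\Gamma_3(N)=0$.

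Coaxiality is exactly the statement that $\Gamma_3=\alpha\,\Gamma_1+(1-\alpha)\,\Gamma_2$ as quadratic forms for some scalar $\alpha$, i.e., that the three coefficient triples $(D_i,E_i,F_i)$ are affinely dependent. Since $F_3=0$, the constant-term equation forces $\alpha=F_2/(F_2-F_1)$, so the heart of the proof is to verify that this same $\alpha$ also satisfies $D_3=\alpha D_1+(1-\alpha)D_2$ and $E_3=\alpha E_1+(1-\alpha)E_2$. The main obstacle will be controlling these two identities, because the raw expressions for $E_1,E_2,F_1,F_2$ are bulky rational functions of $a,b,c,d,k$.

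To tame the computation I will abbreviate $s=a+c$, $t=b+d$, $u=ac$, $v=bd$, $p=k^2+1$. In these variables one obtains the helpful simplifications $O_{b,x}=O_{d,x}=s/2$ and $O_{a,x}+O_{c,x}=s(pv+u)/(2u)$, together with analogous clean forms for the $y$-components. The crucial payoff, which I expect to emerge after clearing denominators, is the clean identity
\[
\frac{F_1}{F_2} \;=\; \frac{pv}{u} \;=\; \frac{(k^2+1)\,bd}{ac},
\]
yielding $\alpha=u/(u-pv)$ in closed form. With this $\alpha$ in hand, the $D$- and $E$-verifications collapse, after factoring, to a common factor $(u-pv)$ cancelling between numerator and denominator, leaving
\[
\alpha D_1+(1-\alpha)D_2 \;=\; -\tfrac{s}{2} \;=\; D_3,\qquad \alpha E_1+(1-\alpha)E_2 \;=\; \frac{s-pt}{2k} \;=\; E_3.
\]
This establishes $\Gamma_3=\alpha\,\Gamma_1+(1-\alpha)\,\Gamma_2$, so the three circles belong to a single pencil and are therefore coaxial.

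The only non-routine step is spotting the clean ratio $F_1/F_2=pv/u$: it is not forced by any obvious geometric symmetry, and verifying it requires matching the bracket polynomials that appear when one expands $O_{a,y}O_{c,y}$ and $O_{b,y}O_{d,y}$ in the variables $s,t,u,v,p$. Once that identity is spotted, the remaining verifications are straightforward linear algebra on the coefficients.
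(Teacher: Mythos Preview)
Your proposal is correct and is essentially the paper's own argument: the same coordinate frame, the same circumcenter formulas \eqref{eq1}--\eqref{eq4}, and the same pivotal identity $F_1/F_2=(k^2+1)bd/(ac)$. The paper packages the computation as showing that $P$, $M$, $N$ each satisfy $\dfrac{\overrightarrow{WO_a}\cdot\overrightarrow{WO_c}}{\overrightarrow{WO_b}\cdot\overrightarrow{WO_d}}=\dfrac{(k^2+1)bd}{ac}$ (equations \eqref{eq25}--\eqref{eq27}), which is exactly your affine-dependence check on the coefficient triples $(D_i,E_i,F_i)$ but sidesteps the explicit determination of $D_3,E_3$.
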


\begin{proof}Like the second proof of Theorem \ref{thm1}, we choose Cartesian coordinate system such that $P(0,0)$, $A(a,0)$, $B(b,kb)$, $C(c,0)$, and $D(d,kd)$, we easily seen $P(0,0)$ is intersection of two diagonals $AC$ and $BD$, and midpoints $M\left(\frac{a+c}{2},0\right)$ and $N\left(\frac{b+d}{2},k\frac{b+d}{2}\right)$. We now compute the coordinates of some points.
	
Circumcenter $O_a$ of triangle $BCD$ is intersection of perpendicular bisector of $BC$, $BD$
	\begin{equation}\label{eq21}
		O_a = \left(\frac{b d k^{2} + b d + c^{2}}{2 c}, \frac{b c k^{2} + b c - b d k^{2} - b d - c^{2} + c d k^{2} + c d}{2 c k} \right).
	\end{equation}
	Circumcenter $O_b$ of triangle $CDA$ is intersection of perpendicular bisector of $CD$, $CA$
	\begin{equation}\label{eq22}
		O_b = \left(\frac{a+c}{2}, \frac{a c - a d - c d + d^{2} k^{2} + d^{2}}{2 d k} \right).
	\end{equation}
	Circumcenter $O_c$ of triangle $DAB$ is intersection of perpendicular bisector of $DA$, $DB$
	\begin{equation}\label{eq23}
		O_c = \left(\frac{a^{2} + b d k^{2} + b d}{2 a}, \frac{-a^{2} + a b k^{2} + a b + a d k^{2} + a d - b d k^{2} - b d}{2 a k} \right).
	\end{equation}
	Circumcenter $O_d$ of triangle $ABC$ is intersection of perpendicular bisector of $AB$, $AC$
	\begin{equation}\label{eq24}
		O_d = \left(\frac{a+c}{2}, \frac{-a b + a c + b^{2} k^{2} + b^{2} - b c}{2 b k} \right).
	\end{equation}
From this, note that power of point $W$ with respect to circle diameter $XY$ is the dot product
$$\mathcal{P}_{W/(XY)}=\overrightarrow{WX}\cdot \overrightarrow{WY}.$$
Using \eqref{eq21}, \eqref{eq22}, \eqref{eq23}, and \eqref{eq24}, we have the ratio of powers
\begin{equation}\label{eq25}
\frac{\mathcal{P}_{P/(O_AO_C)}}{\mathcal{P}_{P/(O_BO_D)}}=\frac{\overrightarrow{PO_A}\cdot\overrightarrow{PO_C}}{\overrightarrow{PO_B}\cdot\overrightarrow{PO_D}}=\frac{(k^2+1)bd}{ac}
\end{equation}
\begin{equation}\label{eq26}
	\frac{\mathcal{P}_{M/(O_AO_C)}}{\mathcal{P}_{M/(O_BO_D)}}=\frac{\overrightarrow{MO_A}\cdot\overrightarrow{MO_C}}{\overrightarrow{MO_B}\cdot\overrightarrow{MO_D}}=\frac{(k^2+1)bd}{ac}
\end{equation}
\begin{equation}\label{eq27}
	\frac{\mathcal{P}_{N/(O_AO_C)}}{\mathcal{P}_{N/(O_BO_D)}}=\frac{\overrightarrow{NO_A}\cdot\overrightarrow{NO_C}}{\overrightarrow{NO_B}\cdot\overrightarrow{NO_D}}=\frac{(k^2+1)bd}{ac}.
\end{equation}
Therefore, 
\begin{equation}\label{eq28}\frac{\overrightarrow{PO_A}\cdot\overrightarrow{PO_C}}{\overrightarrow{PO_B}\cdot\overrightarrow{PO_D}}=\frac{\overrightarrow{MO_A}\cdot\overrightarrow{MO_C}}{\overrightarrow{MO_B}\cdot\overrightarrow{MO_D}}=\frac{\overrightarrow{NO_A}\cdot\overrightarrow{NO_C}}{\overrightarrow{NO_B}\cdot\overrightarrow{NO_D}}=\frac{(k^2+1)bd}{ac}=\frac{\overrightarrow{PB}\cdot\overrightarrow{PD}}{\overrightarrow{PA}\cdot\overrightarrow{PC}}.
\end{equation}
By property of ratio powers, we have the circles with diameters $O_aO_c$, $O_bO_d$, and the circumcircle of triangle $PMN$ are coaxial. This completes the proof.
\end{proof}

\begin{proof}[Second proof of Theorem \ref{thm2}]Letting $U \equiv AD \cap BC$ and $V \equiv AB \cap CD$. In order to prove that $P$ is midpoint of $QR$, we need to prove $QR \parallel UV$ (using harmonic pencil as in the first proof of Theorem \ref{thm1}) or in other words $PW \perp UV$. By homothety with center $X$ and factor $\frac{1}{2}$, this is equivalent to prove that the line joining the midpoints $P'$ and $K'$ of $XP$ and $XK$ is perpendicular to $UV$. From Lemma \ref{lem3}, we deduce that $K'$ falls on the Newton line of $O_AO_BO_CO_C$ and from Lemma \ref{lem3}, we know this Newton line is perpendicular to $UV$ from Lemma \ref{lem2}, which completes the proof.
\end{proof}

\begin{acknowledgements}We thank Chris van Tienhoven and his associates in \cite{4d,4e} for providing us with valuable knowledge about Quadrangle, Quadrilateral, and Quadrigon and how to categorize them.
	
We thank open math software Geogebra Geometry \cite{10a} and Sage Notebook \cite{10} for drawing and transforming algebraic expressions.
\end{acknowledgements}

\end{document}